\DeclareMathOperator{\tr}{tr}
\DeclareMathOperator{\rank}{rank}
\newcommand{\ud}{\mathrm{d}}
\newtheorem{defi}{Definition}[section]
\newtheorem{proposition}[defi]{Proposition}
\newtheorem{corollary}[defi]{Corollary}
\newcommand{\Iff}{if\textcompwordmark f}
\begin{document}
\title{\textbf{Characterizations of Ruled Surfaces in $\mathbb{R}^3$ \\and of Hyperquadrics in $\mathbb{R}^{n+1}$ \\via Relative Geometric Invariants}}

\author         {\textbf{Stylianos Stamatakis, Ioannis Kaffas} and \textbf{Ioanna-Iris Papadopoulou}
                \\[5mm] Department of Mathematics, Aristotle University of Thessaloniki\\
                GR-54124 Thessaloniki, Greece\\
                e-mail: stamata@math.auth.gr
                }
\date{}
\maketitle
%\begin{center}
                %Dedicated to Georg Stamou on the occasion of his  70$^{th}$ birthday
                %\end{center}
\begin{abstract}
                We consider hypersurfaces in the real Euclidean space $\mathbb{R}^{n+1}$ ($n\geq2$) which are relatively normalized.
                We give necessary and sufficient conditions a) for a surface of negative Gaussian curvature in $\mathbb{R}^3$ to be ruled, b) for a hypersurface of positive Gaussian curvature in $\mathbb{R}^{n+1}$ to be a hyperquadric and c) for a relative normalization to be constantly proportional to the equiaffine normalization.
                \\[2mm] {\em MSC 2010:} 52A15, 52A20, 53A05, 53A07, 53A15, 53A25, 53A40
                \\[2mm] {\em Keywords:} Ruled surfaces, convex hypersurfaces, ovaloids, hyperquadrics, relative normalizations, equiaffine normalization, Pick-invariant
\end{abstract}

\section{Preliminaries}
In this section we shall fix our notation and state some of the most important notions and formulae concerning the relative Differential
Geometry of hypersurfaces in the real Euclidean space $\mathbb{R}^{n+1}$ ($n\geq2$). Our presentation is mainly based on the text \cite{Manhart} and \cite{Schneider1967}. For a more detailed exposition of the subject the reader might read \cite{Schirokow}.

\vspace{2mm}
In the Euclidean space $\mathbb{R} ^{n+1}$ let $\Phi=(M,\bar{x})$, $M\subset\mathbb{R} ^{n}$, be a $C^{r}$-hypersurface defined by an $n$-dimensional, oriented, connected $C^{r}$-manifold $M$ ($r\geq3$) and by a $C^{r}$-immersion  $\bar{x}:M\rightarrow\mathbb{R}^{n+1}$,   whose Gaussian curvature $K_{I}$ never vanishes on $M$.
A $C^{s}$-mapping $\bar{y}:M\rightarrow\mathbb{R}^{n+1}$  ($ r > s \geq 1$) is called a $C^{s}$-\textit{relative normalization}, if
            \begin{subequations}\label{1}
            \begin{align}
            \rank \left(\{\bar{x}_{/1},\bar{x}_{/2}, \dots, \bar{x}_{/n}, \bar{y}\}\right) &= n + 1,\\
            \rank \left(\{\bar{x}_{/1},\bar{x}_{/2}, \dots, \bar{x}_{/n},\bar{y}_{/i}\}\right) &= n, \quad \forall \,\, i = 1,2,\dots,n,
            \end{align}
            \end{subequations}
for all $\left(u^{1},u^{2}, \dots, u^{n} \right) \in M$, where
            \begin{equation*}
            f_{/i}:=\frac{\partial f}{\partial u^{i}},~f_{/ij}:=\frac{\partial ^{2}f}{\partial u^{i}\partial u^{j}} \quad \text{etc.}
            \end{equation*}
denote partial derivatives of a function (or a vector-valued function) $f$. We will also say that the pair $(\Phi,\bar{y})$ is a \emph{relatively normalized hypersurface} of $\mathbb{R} ^{n+1}$. \\
The \emph{covector} $\bar{X}$ of the tangent vector space is defined by%
            \begin{equation}\label{2}
            \langle \bar{X},\bar{x}_{/i}\rangle=0 \quad \text{and}\quad \langle \bar{X},\bar{y}\rangle = 1
            \quad \left(i=1,2,\dots,n\right),
            \end{equation}
where $\langle$ $,$ $\rangle$ denotes the standard scalar product in $\mathbb{R}^{n+1}$.\\
The quadratic differential form
            \begin{equation*}
            G = G_{ij}\,du^{i}\,du^{j}, \quad \text{where} \quad G_{ij}:=\langle \bar{X},\bar{x}_{/ij}\rangle,
            \end{equation*}
is definite or indefinite, depending on whether the Gaussian curvature $K_{I}$ of $\Phi$ is positive or negative, and is called the \emph{relative metric} of $\Phi$.
From now on we shall use $G_{ij}$ as the fundamental tensor for ``raising and lowering the indices''  in the sense of classical tensor notation.\\
Let $\bar{\xi}:M\rightarrow\mathbb{R}^{n+1}$ be the Euclidean normalization of $\Phi$.
By virtue of \eqref{1} the \emph{support function} of the relative normalization $\bar{y}$, which is defined by
            \begin{equation*}
            q:=\langle \bar{\xi},\bar{y}\rangle:M \rightarrow \mathbb{R} ,\quad q\in C^{s}(M),
            \end{equation*}
never vanishes on $M$. In the sequel we choose $\bar{\xi}$ and $\bar{X}$ to have the same orientation. Then $q$ is positive everywhere on $M$.\\
Because of \eqref{2}, it is
            \begin{equation}\label{3}
            \bar{X} = q^{-1} \bar{\xi}, \quad  G_{ij}=q^{-1}h_{ij},  \quad G^{\left( ij \right)} = q\, h^{\left( ij \right)},
            \end{equation}
where $h_{ij}$ are the components of the second fundamental form $II$ of $\Phi$ and $h^{\left( ij \right)} $ resp. $ G^{\left( ij \right)} $ the inverses of the tensors $h_{ ij }$ and $G_{ ij }$.
Let $\nabla^{G}_{i}$ denote the covariant derivative corresponding to $G$.
By
            \begin{equation*}
            A_{jkl}:=\langle \bar{X},\,\nabla^{G}_{l} \, \nabla^{G}_{k}\,\bar{x}_{/j}\rangle
            \end{equation*}
is the (symmetric) \emph{Darboux tensor} defined. It gives occasion to define the \emph{Tchebychev-vector} $\bar{T}$ of the relative normalization $\bar{y}$ \begin{equation*}
            \bar{T}:=T^{m}\, \bar{x}_{/m},\quad \text{where\quad }T^{m}:=\frac{1}{n}A_{i}^{im},
            \end{equation*}%
and the \emph{Pick-invariant}
            \begin{equation*}
            J:=\frac{1}{n \left( n-1 \right)} \, A_{jkl} \, A^{jkl}.
            \end{equation*}
We mention, that when the second fundamental form $II$ is positive definite, so does $G$ and in this case $J \geq 0$ holds on $M$ (see e.g. \cite[p.~133]{Huck}). \\
Denoting by $H_{I}$ the Euclidean mean curvature of $\Phi$, by $\nabla^{II}$ resp. $\triangle^{II}$ the first resp. the second Beltrami differential operator with respect to the fundamental form $II$ of $\Phi$ and by $S_{II}$ the scalar curvature of $II$, the Pick-invariant is computed by (see \cite{Manhart})
            \begin{equation}\label{4}
            J = \frac {3(n+2)}{4n(n-1)}\,q \, \nabla^{II}\left( \ln q, \ln q - \ln |K_{I}|^{\frac{2}{n+2}}\right) +q\, \frac{1}{n\left(n-1\right)} P,
            \end{equation}
where $P$ is the function \cite[p.~231]{Schneider1972}
            \begin{equation}\label{5}
            P  = n\left(n-1\right)\left(S_{II}-H_{I}\right) + \left(2K_{I}\right)^{-2}\nabla^{II}K_{I}.
            \end{equation}
The \emph{relative shape operator} has the coefficients $B_{i}^{j}$ such that
            \begin{equation*}
            \bar{y}_{/i}=:-B_{i}^{j}\, \bar{x}_{/j}.
            \end{equation*}%
The \emph{mean relative curvature}, which is defined by
            \begin{equation*}
            H:=\frac{1}{n}\tr\left(  B_{i}^{j}\right),
            \end{equation*}
is computed by (see \cite{Manhart})
            \begin{equation}\label{6}
            H = q\, H_{I} + \frac {q}{n} \Big[ \triangle^{II}\left( \ln q\right) + \nabla^{II} \left( \ln q,\ ln \left( q\, |K_{I}|^{\frac{-1}{2}} \right) \right) \Big].
            \end{equation}
The scalar curvature $S$ of the relative metric $G$, which is defined formally and is the curvature of the Riemannian or pseudo-Riemannian manifold ($\Phi,G$), the mean relative curvature $H$ and the Pick-invariant $J$ satisfy the \emph{Theorema Egregium of the relative Differential Geometry}, which states that
            \begin{equation}\label{7}
            H + J - S = \frac{n}{n-1}\,\|\bar{T} \|_{G},
            \end{equation}
where $\| \bar{T} \|_{G} := G_{ij}\,T^{i}\,T^{j}$ is the \emph{relative norm} of the Tchebychev-vector $\bar{T}$.
\section{The Tchebychev-function and some related formulae}
We consider the function
            \begin{equation}\label{8}
            \varphi : = \left( \frac {q}{q_\textsc{aff}} \right)^\frac{n+2}{2n},
            \end{equation}
where
            \begin{equation*}
            q_\textsc{aff}:=|K_{I}|^ \frac {1} {n+2}
            \end{equation*}
is the support function of the \emph{equiaffine normalization} $\bar{y}_\textsc{aff}$ and we call it the \emph{Tchebychev-function} of the relative normalization $\bar{y}$.
It is known, that for the components of the Tchebychev-vector holds \cite[p.~199]{Manhart}
            \begin{equation*}
            T^{i}= G^{(ij)} \left( \ln \varphi \right)_{/j}.
            \end{equation*}
Hence, by (\ref{3}c), we obtain
            \begin{equation*}
            \bar{T}= \nabla^{G}\left( \ln \varphi,\bar{x} \right) = q \,  \nabla^{II}\left( \ln \varphi,\bar{x} \right)
            \end{equation*}
and
            \begin{equation}\label{9}
            \left \Vert \bar{T} \right \Vert _{G} = \nabla^{G}\left( \ln \varphi\right) = q \, \nabla^{II}\left( \ln \varphi\right).
            \end{equation}
We notice that the Tchebychev-vector vanishes identically \Iff{} the Tchebychev-function $ \varphi $ is constant, i.e., by \eqref{8}, \Iff{} $q = c\, q_\textsc{aff}$, $c \in \mathbb{R}^{*}$, which means that the relative normalization $\bar{y}$ and the equiaffine normalization $\bar{y}_\textsc{aff}$ are constantly proportional.\\
From the relation \eqref{4} we obtain the Pick-invariant of the Euclidean normalization ($q=1$)
            \begin{equation*}
            J_\textsc{euk} = \frac{1}{n\left( n-1 \right)}P.
            \end{equation*}
Hence by using \eqref{5} we find
            \begin{equation}\label{10}
            J_\textsc{euk}= S_{II}-H_{I}+\frac{(n+2)^2 }{4n(n-1)  }\,\nabla^{II} \left( \ln q_\textsc{aff}\right).
            \end{equation}
From   \eqref{8} and \eqref{10} we conclude that the relation \eqref{4} can be written as
            \begin{equation}\label{11}
            \frac{J}{q} =\frac {3(n+2)}{4n(n-1)}\,\left[\frac{4n^2}{(n+2)^2}\,\nabla^{II} \left(\ln \varphi\right)-\nabla^{II} \left(\ln q_\textsc{aff} \right) \right] + J_\textsc{euk}.
            \end{equation}
For the equiaffine ($\varphi = 1$) Pick-invariant $J_\textsc{aff}$ we deduce
            \begin{equation}\label{12}
            \frac{J_\textsc{aff}}{q_\textsc{aff}}= \frac {-3(n+2)}{4n(n-1)}\,\nabla^{II}\left(\ln q_\textsc{aff}\right) + J_\textsc{euk}.
            \end{equation}
By subtracting \eqref{12} from \eqref{11} we obtain
            \begin{equation}\label{13}
            \frac{J}{q} -\frac{J_\textsc{aff}}{q_\textsc{aff}}= \frac {3n}{(n-1)(n+2)}\,\nabla^{II}\left(\ln \varphi\right).
            \end{equation}
Similarly, taking account of  \eqref{6} and \eqref{8}, we find
            \begin{align}\label{14}
            \frac{H}{q}-H_{I}&=\frac{2}{n+2}\triangle^{II}\left(\ln \varphi\right)+\frac{4n}{(n+2)^2}\,\nabla^{II}\left(\ln \varphi\right)\\
            &- \frac{n-2}{n+2} \,\nabla^{II}\left(\ln \varphi,\ln q_\textsc{aff} \right)+ \frac{1}{n} \triangle^{II}\left(\ln q_\textsc{aff}\right)-\frac{1}{2}\,\nabla^{II}\left(\ln q_\textsc{aff}\right). \notag
            \end{align}
For the mean equiaffine curvature $H_{\textsc{aff}}$ we infer
           \begin{equation}\label{15}
           \frac{H_\textsc{aff}}{q_\textsc{aff}}-H_{I}=\frac{1}{n} \triangle^{II}\left(\ln q_\textsc{aff}\right)-\frac{1}{2}\,\,\nabla^{II}\left(\ln  q_\textsc{aff}\right).
           \end{equation}
By subtracting \eqref{15} from \eqref{14} we obtain
            \begin{equation}\label{16}
            \frac{H}{q}-\frac{H_\textsc{aff}}{q_\textsc{aff}}=\frac{2}{n+2}\triangle^{II}\left(\ln \varphi\right)+\frac{4n}{(n+2)^2}\,\nabla^{II}\left(\ln  \varphi\right) - \frac{n-2}{n+2} \,\nabla^{II}\left(\ln \varphi,\ln q_\textsc{aff} \right).
            \end{equation}
The relations \eqref{7}, \eqref{9}, \eqref{13} and \eqref{16} may be combined into
            \begin{equation*}
            \frac{S}{q} - \frac{J_\textsc{aff} + H_\textsc{aff}}{q_\textsc{aff}} = \frac{2}{n+2} \triangle^{II}\left( \ln \varphi\right) -\frac{n(n-2)}{(n+2)^2}\nabla^{II}\left( \ln \varphi\right)- \frac{n-2}{n+2} \,\nabla^{II}\left(\ln \varphi,\ln q_\textsc{aff} \right)
            \end{equation*}
and with reference to
            \begin{equation}\label{17}
            S_\textsc{aff} = J_\textsc{aff} + H_\textsc{aff},
            \end{equation}
where  $S_\textsc{aff}$ denotes the inner equiaffine curvature, we conclude that
            \begin{equation}\label{18}
            \frac{S}{q} - \frac{S_\textsc{aff}}{q_\textsc{aff}} = \frac{2}{n+2} \triangle^{II}\left( \ln \varphi\right) -\frac{n(n-2)}{(n+2)^2}\nabla^{II}\left( \ln \varphi\right)- \frac{n-2}{n+2} \,\nabla^{II}\left(\ln \varphi,\ln q_\textsc{aff} \right).
            \end{equation}
\section{Characterizations of ruled surfaces of $\mathbb{R}^{3}$ and of hyperquadrics of $\mathbb{R}^{n+1}$}
Let now $\alpha$ be any real number.
By using the relations \eqref{13} and   \eqref{16}--\eqref{18} we obtain
            \begin{align*}
            \frac{\alpha \left ( S - H \right) + J}{q}
            =  \left( \alpha + 1 \right)\frac {J_{\textsc{aff}}} {q_{\textsc{aff}}} - \frac {n \left[ \alpha \left( n - 1 \right) - 3 \right] }{\left( n - 1 \right) \left( n + 2 \right)} \nabla^{II}\ln \varphi. \notag
            \end{align*}
For $\alpha = \frac {3}{n - 1} $ we get
            \begin{equation}\label{19}
            \frac{3 \left( S - H \right) + \left( n - 1 \right) J }{q} =  \left( n + 2 \right)
            \frac{J_\textsc{aff}} {q_\textsc{aff}}.
            \end{equation}
This result implies the following
            \begin{proposition}\label{P:3}
            Let $(\Phi,\bar{y})$ be a relatively normalized hypersurface of \/ $\mathbb{R}^{n+1}$. Then the function
            \begin{equation*}
            \frac{3 \left( S - H \right) + \left( n - 1 \right) J }{q}
            \end{equation*}
            is independent of the relative normalization and vanishes \Iff{} $J_\textsc{aff}=0$.
            \end{proposition}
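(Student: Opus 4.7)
The proof is essentially already done in the excerpt: the identity \eqref{19} derived just before the statement,
\begin{equation*}
\frac{3(S-H)+(n-1)J}{q} = (n+2)\,\frac{J_\textsc{aff}}{q_\textsc{aff}},
\end{equation*}
does all the heavy lifting. My plan is therefore to invoke \eqref{19} and read off the two claimed conclusions from the right-hand side.

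First I would observe that the quantities on the right-hand side of \eqref{19}, namely the equiaffine Pick-invariant $J_\textsc{aff}$ and the equiaffine support function $q_\textsc{aff}=|K_I|^{1/(n+2)}$, are determined by the hypersurface $\Phi$ itself and make no reference to the chosen relative normalization $\bar y$. Hence the left-hand side, which a priori depends on $\bar y$ through $q$, $H$, $J$, and $S$, must in fact be invariant under a change of relative normalization. This gives the first assertion of the proposition.

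For the second assertion, I would note that by hypothesis $K_I$ never vanishes on $M$, so $q_\textsc{aff}=|K_I|^{1/(n+2)}>0$ everywhere, and of course $n+2\neq 0$. Therefore the right-hand side of \eqref{19} vanishes identically on $M$ if and only if $J_\textsc{aff}\equiv 0$, and the equivalence transfers to the left-hand side.

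The derivation of \eqref{19} itself, which combines \eqref{13}, \eqref{16}, \eqref{17}, and \eqref{18} with the specific choice $\alpha=3/(n-1)$ that annihilates the $\nabla^{II}\ln\varphi$ term, is the one nontrivial computation; since it has been carried out just above the statement, no further obstacle remains. The proof thus consists of two short sentences citing \eqref{19}.
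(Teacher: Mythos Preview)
Your proposal is correct and follows the paper's own argument exactly: the paper derives \eqref{19} immediately before stating the proposition and then simply writes ``This result implies the following,'' leaving the reader to make precisely the observations you spell out. Your added remarks that $q_\textsc{aff}>0$ and that $J_\textsc{aff}$, $q_\textsc{aff}$ are intrinsic to $\Phi$ merely make explicit what the paper leaves implicit.
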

On account of the relations \eqref{7} and \eqref{19} we infer that
            \begin{equation}\label{20}
            \|\bar{T} \| _{G} = \frac {\left(n - 1 \right) \left(n + 2 \right)} {3n} \left( J - \frac {q} {q_{\textsc{aff}}} J_{\textsc{aff}} \right) = \frac {n + 2} {n} \left( H - S + \frac {q} {q_{\textsc{aff}}} J_\textsc{aff}\right).
            \end{equation}
From \eqref{20} it follows immediately that
            \begin{equation}\label{21}
            J_{\textsc{aff}} = 0 \iff  3n \left \Vert T \right \Vert _{G} = \left( n - 1 \right) \left( n + 2 \right) J \iff
            n \left \Vert T \right \Vert _{G} = \left( n + 2 \right) \left( H - S \right) .
            \end{equation}
We suppose that $n = 2$ and $K_{I} < 0$.
It is well known (see~\cite[p.~125]{Blaschke}), that the vanishing of $J_\textsc{aff}$ characterizes the ruled surfaces of $\mathbb{R}^{3}$ among the surfaces of negative Gaussian curvature. So,  from the relations \eqref{19} and \eqref{21} we obtain the following characterizations for  ruled surfaces in $\mathbb{R}^{3}$:
            \begin{proposition}\label{P:4}
            Let $\Phi \subset \mathbb{R}^{3}$ be a surface of negative Gaussian curvature.
            Then \/ $\Phi$ is a ruled surface \Iff{} there exists a relative normalization of \/ $\Phi$, for which one of the following equivalent properties holds true:
            \begin{description}
            \item[$(a)$]  \qquad    $   3 \left( S - H \right) +J$=0,
            \item[$(b)$]  \qquad    $   3\|\bar{T}\|_{G} = 2 J $,
            \item[$(c)$]  \qquad    $   \|\bar{T}\|_{G} = 2 \left( H - S \right)$.
            \end{description}
            \end{proposition}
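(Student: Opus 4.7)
The plan is to deduce the proposition as an immediate specialization of \eqref{19} and \eqref{21} to the case $n = 2$, combined with the classical theorem of Blaschke cited just before the statement.

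First I would substitute $n = 2$ into \eqref{19}. Since then $n - 1 = 1$ and $n + 2 = 4$, it reduces to
\begin{equation*}
\frac{3(S - H) + J}{q} \;=\; 4\,\frac{J_{\textsc{aff}}}{q_{\textsc{aff}}}.
\end{equation*}
Because $q > 0$ by our choice of orientation and $q_{\textsc{aff}} = |K_{I}|^{1/4} > 0$ everywhere on $M$, this identity shows that property $(a)$ is equivalent to $J_{\textsc{aff}} = 0$, and in fact for \emph{any} relative normalization $\bar{y}$ of $\Phi$. Next I would specialize the chain of equivalences \eqref{21} to $n = 2$, which yields
\begin{equation*}
J_{\textsc{aff}} = 0 \;\iff\; 3\,\|\bar{T}\|_{G} = 2J \;\iff\; \|\bar{T}\|_{G} = 2(H - S),
\end{equation*}
so that $(b)$ and $(c)$ are each equivalent to $J_{\textsc{aff}} = 0$ as well, again independently of the choice of $\bar{y}$.

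Putting these specializations together, the three conditions $(a)$, $(b)$, $(c)$ are pairwise equivalent, and each is satisfied for \emph{some} relative normalization of $\Phi$ if and only if it is satisfied for \emph{every} such normalization, if and only if $J_{\textsc{aff}} = 0$. Invoking Blaschke's theorem \cite[p.~125]{Blaschke}, which asserts that the vanishing of the equiaffine Pick invariant characterizes ruled surfaces among surfaces of negative Gaussian curvature in $\mathbb{R}^{3}$, I would then conclude
\begin{equation*}
\Phi \text{ is ruled} \;\iff\; J_{\textsc{aff}} = 0 \;\iff\; (a) \;\iff\; (b) \;\iff\; (c),
\end{equation*}
finishing the proof.

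There is essentially no technical obstacle here: all the analytic work has been carried out in deriving \eqref{19} and \eqref{21}, and the geometric input comes entirely from Blaschke's classical result. The only point worth flagging in the write-up is that conditions $(a)$--$(c)$ look as if they depend on the choice of $\bar{y}$ through $S$, $H$, $J$ and $\bar{T}$, yet in fact each is equivalent to the purely hypersurface-intrinsic condition $J_{\textsc{aff}} = 0$; this is what legitimizes the ``there exists a relative normalization'' phrasing in the statement.
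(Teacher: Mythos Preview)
Your proof is correct and follows exactly the paper's approach: specialize \eqref{19} and \eqref{21} to $n=2$, use the positivity of $q$ and $q_{\textsc{aff}}$, and then invoke Blaschke's theorem to translate $J_{\textsc{aff}}=0$ into the ruled-surface characterization. Your remark that the conditions are in fact independent of the choice of normalization is a worthwhile clarification of the ``there exists'' phrasing.
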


\vspace{2mm}
Let now be $n \geq 2$ and $K_{I} > 0$.
Moreover, without loss of generality, we assume that the second fundamental form $II$ is positive definite.
It is also well-known (see \cite[p.~380]{Schneider1967}) that in this case the equiaffine Pick-invariant is non-negative and that it vanishes \Iff{} $\Phi$ is a hyperquadric.
So,  by using the relations \eqref{19} and \eqref{21}, we can characterize the hyperquadrics of $\mathbb{R}^{n+1}$ among all hypersurfaces of positive Gaussian curvature as the following proposition states:
           \begin{proposition}\label{Prop3}
           Let $\Phi \subset \mathbb{R}^{n+1}$ be a hypersurface of positive Gaussian curvature.
           Then $\Phi$ is a hyperquadric \Iff{} there exists a relative normalization of  $\Phi$, for which one of the following equivalent properties holds true:
           \begin{description}
           \item[$(a)$]  \qquad     $3 \left( S -H \right) + \left( n-1 \right) J = 0$,
           \item[$(b)$]  \qquad     $3n \left \Vert T \right \Vert _{G} = \left( n - 1 \right) \left( n + 2 \right) J$ ,
           \item[$(c)$]  \qquad     $ n \left \Vert T \right \Vert _{G} = \left( n + 2 \right) \left( H - S \right) $.
           \end{description}
           \end{proposition}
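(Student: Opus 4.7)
The plan is to deduce Proposition \ref{Prop3} directly from the machinery already assembled, namely relation \eqref{19}, the chain of equivalences \eqref{21}, and the classical characterization of hyperquadrics via the vanishing of the equiaffine Pick-invariant. Since $q$ and $q_{\textsc{aff}}$ are strictly positive on $M$, the right-hand side of \eqref{19} vanishes if and only if $J_{\textsc{aff}}=0$, and similarly the right-hand side of \eqref{19} is zero exactly when its numerator is zero; this is precisely the content of condition (a). Hence (a) $\iff J_{\textsc{aff}}=0$, and \eqref{21} then gives (a) $\iff$ (b) $\iff$ (c) as a package.

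For the main equivalence, I would argue in two directions. First, I would invoke the classical theorem of Blaschke--Schneider cited in the paper: under the standing hypotheses ($K_I>0$ and $II$ positive definite), $\Phi$ is a hyperquadric if and only if $J_{\textsc{aff}}\equiv 0$. Combining this with the previously established chain (a) $\iff$ (b) $\iff$ (c) $\iff J_{\textsc{aff}}=0$, I obtain that $\Phi$ is a hyperquadric if and only if (a), (b), (c) hold for the equiaffine normalization (which always exists and may serve as a witness). Conversely, if there exists \emph{any} relative normalization $\bar{y}$ for which one of (a)--(c) is satisfied, the equivalence extracted from \eqref{19} and \eqref{21} forces $J_{\textsc{aff}}=0$, and hence $\Phi$ is a hyperquadric.

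The subtle point that has to be emphasized is the ``there exists a relative normalization'' quantifier: it is crucial to note that $J_{\textsc{aff}}$ depends only on $\Phi$, not on the chosen relative normalization $\bar{y}$. This is exactly what Proposition \ref{P:3} already guarantees via the right-hand side $(n+2)\,J_{\textsc{aff}}/q_{\textsc{aff}}$ of \eqref{19}, so that $3(S-H)+(n-1)J)/q$ is a geometric invariant of $\Phi$ alone. Because of this, the existence of one relative normalization satisfying (a) is automatically promoted to the statement that \emph{every} relative normalization satisfies (a), and in particular the equiaffine one; this closes the loop with Blaschke--Schneider.

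I do not anticipate a real obstacle: all the analytic work sits in the derivation of \eqref{19} and \eqref{21}, which is already done. The only thing to be careful about is the standing assumption that $II$ is positive definite, which is needed both to ensure $J_{\textsc{aff}}\geq 0$ and to apply the Blaschke--Schneider characterization; once these are in place, the proof reduces to a short citation plus a direct reading of \eqref{19} and \eqref{21}.
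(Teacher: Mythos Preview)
Your proposal is correct and follows essentially the same route as the paper: invoke the Blaschke--Schneider characterization $J_{\textsc{aff}}=0 \Leftrightarrow \Phi$ is a hyperquadric, and read off the equivalence with conditions (a)--(c) directly from \eqref{19} and \eqref{21}. Your extra remark that the ``there exists'' quantifier upgrades to ``for all'' via Proposition~\ref{P:3} is a nice clarification that the paper leaves implicit.
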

\section{The vanishing of the Pick-invariant and some integral formulae}
Another consequence of relation \eqref{13} are the following two propositions:
            \begin{proposition}
            Let  $\Phi \subset  \mathbb{R}^{n+1}$ be a hypersurface of positive Gaussian curvature.
            For the Pick-invariant of every relative normalization $\bar{y}$ the following relation is valid
            \begin{equation}\label{22}
            \frac{J}{q} -\frac{J_\textsc{aff}}{q_\textsc{aff}} \geq 0.
            \end{equation}
            The equality holds \Iff{} the relative normalization $\bar{y}$ and the equiaffine normalization $\bar{y}_\textsc{aff}$ are constantly proportional.
            \end{proposition}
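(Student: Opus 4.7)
The plan is to apply identity \eqref{13} as a one-line deduction. That equation,
\[
\frac{J}{q} - \frac{J_\textsc{aff}}{q_\textsc{aff}} = \frac{3n}{(n-1)(n+2)}\,\nabla^{II}\!\left(\ln \varphi\right),
\]
was already established in Section~2 for every relatively normalized hypersurface, so the entire argument reduces to reading off the sign of the right-hand side.

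First I would observe that, under the hypothesis $K_{I}>0$, the second fundamental form $II$ (with the chosen orientation) is positive definite. Hence the first Beltrami differential operator $\nabla^{II}(\,\cdot\,) = h^{(ij)} f_{/i}f_{/j}$ is a non-negative quadratic form in the gradient of its argument. Since $n\geq 2$, the scalar factor $\frac{3n}{(n-1)(n+2)}$ is strictly positive, and so the right-hand side of \eqref{13} is non-negative. This yields \eqref{22}.

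For the equality statement, I would argue that $\nabla^{II}(\ln \varphi)=0$ forces $(\ln \varphi)_{/i}=0$ for all $i$, by positive definiteness of $h^{(ij)}$. Since $M$ is connected, $\varphi$ is then a (positive) constant on $M$. Invoking the definition \eqref{8} of the Tchebychev-function, constancy of $\varphi$ is equivalent to $q = c\, q_\textsc{aff}$ for some $c\in\mathbb{R}^{*}$, which is precisely the statement (made explicit in the paragraph after \eqref{9}) that the relative normalization $\bar{y}$ and the equiaffine normalization $\bar{y}_\textsc{aff}$ are constantly proportional. Conversely, if $q = c\, q_\textsc{aff}$, then $\varphi$ is constant, $\nabla^{II}(\ln \varphi)=0$, and \eqref{13} gives equality in \eqref{22}.

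There is no real obstacle here; the only thing to be careful about is to state clearly why $\nabla^{II}(\ln\varphi)\geq 0$ (positive definiteness of $II$, which rests on $K_{I}>0$ together with the orientation choice fixing $q>0$) and to cite the remark following \eqref{9} for the equivalence ``$\varphi$ constant $\iff$ the two normalizations are constantly proportional''. The proof is therefore essentially a two-sentence consequence of \eqref{13}.
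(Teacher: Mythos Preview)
Your argument is correct and coincides with the paper's own proof: both apply \eqref{13} directly, use the positive definiteness of $II$ (from $K_{I}>0$ with the appropriate orientation) to conclude $\nabla^{II}(\ln\varphi)\geq 0$, and then characterize equality via $\varphi=\text{const.}\Leftrightarrow q=c\,q_\textsc{aff}$. The only difference is that you spell out the connectedness of $M$ and the converse direction explicitly, which the paper leaves implicit.
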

            \begin{proof}
            Because of the assumption $K_{I} > 0$, it is $\nabla^{II}\left(\ln \varphi \right)\geq 0$, so from \eqref{13} it follows the inequality.
            Furthermore it is
            \begin{equation*}
            \frac{J}{q} -\frac{J_\textsc{aff}}{q_\textsc{aff}} = 0 \Leftrightarrow \nabla^{II}\left(\ln \varphi\right) = 0 \Leftrightarrow \varphi = const. \Leftrightarrow q = c \,{q_\textsc{aff}}, c \in~\mathbb{R}^{\ast},
            \end{equation*}
            which proves the assertion.
            \end{proof}
            \begin{proposition}
            Let  $\Phi \subset  \mathbb{R}^{n+1}$ be a hypersurface of positive Gaussian curvature.
            If there is a relative normalization $\bar{y}$, whose Pick-invariant vanishes identically, then $\Phi$ is a hyperquadric. Furthermore $\bar{y}$ is constantly proportional to the equiaffine normalization $\bar{y}_\textsc{aff}$.
            \end{proposition}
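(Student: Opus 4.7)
The plan is to read off both conclusions directly from identity \eqref{13} by using two non-negativity facts available under the hypothesis $K_I>0$. With the standard orientation choice that makes the second fundamental form $II$ positive definite, we have $q_\textsc{aff}>0$, and the quoted fact (see the paragraph preceding \eqref{4}) that the Pick-invariant of every relative normalization is non-negative when $G$ is positive definite applies in particular to the equiaffine normalization, giving $J_\textsc{aff}\ge 0$. Moreover, $\nabla^{II}(\ln\varphi)\ge 0$ since $II$ is positive definite.

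Given these preparations, I would substitute the hypothesis $J\equiv 0$ into \eqref{13}. The left-hand side reduces to $-J_\textsc{aff}/q_\textsc{aff}$, which is $\le 0$, while the right-hand side $\dfrac{3n}{(n-1)(n+2)}\nabla^{II}(\ln\varphi)$ is $\ge 0$. Hence both sides must vanish identically on $M$, so that simultaneously $J_\textsc{aff}\equiv 0$ and $\nabla^{II}(\ln\varphi)\equiv 0$.

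From $J_\textsc{aff}\equiv 0$ the classical Schneider result invoked in the excerpt (see \cite[p.~380]{Schneider1967}) immediately yields that $\Phi$ is a hyperquadric, proving the first assertion. From $\nabla^{II}(\ln\varphi)\equiv 0$ on the connected manifold $M$ we conclude that $\varphi$ is a positive constant, and by the definition \eqref{8} of the Tchebychev-function this is equivalent to $q = c\, q_\textsc{aff}$ for some $c\in\mathbb{R}^{*}$, i.e., $\bar{y}$ and $\bar{y}_\textsc{aff}$ are constantly proportional, exactly as remarked after \eqref{9}.

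There is no substantive obstacle here: once the two sign facts are in place, the statement is an immediate corollary of identity \eqref{13} combined with the classical hyperquadric characterization. The only point requiring a moment of care is to verify that the assumption $K_I>0$ together with the implicit choice making $II$ positive definite simultaneously forces both sides of \eqref{13} to have opposite signs, so that each must vanish on its own.
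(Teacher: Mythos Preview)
Your proof is correct and follows essentially the same route as the paper: substitute $J\equiv 0$ into identity \eqref{13}, observe that the resulting equation has a non-positive left-hand side and a non-negative right-hand side (since $J_\textsc{aff}\ge 0$, $q_\textsc{aff}>0$, and $\nabla^{II}(\ln\varphi)\ge 0$ under $K_I>0$), conclude both vanish, and then invoke the Schneider characterization together with $\varphi=\text{const.}\Leftrightarrow q=c\,q_\textsc{aff}$.
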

            \begin{proof}
            Let $\bar{y}$ be a relative normalization of $\Phi$ with vanishing Pick-invariant. Then, from the relation \eqref{13} we obtain
            \begin{equation}\label{23}
            -\frac{J_\textsc{aff}}{q_\textsc{aff}}= \frac {3n}{(n-1)(n+2)}\,\nabla^{II}\left(\ln \varphi\right).
            \end{equation}
            Because of  $J_\textsc{aff} \geq 0$ and $\nabla^{II}\ln \varphi \geq 0$, both members of \eqref{23} vanish.
            But $J_\textsc{aff} \geq 0$ implies that $\Phi$ is a hyperquadric and  $\nabla^{II}\ln \varphi = 0$ implies that the function $\varphi $ is constant, which means that $q = c \,{q_\textsc{aff}}, c \in~\mathbb{R}^{\ast}$ and the proof is completed.
            \end{proof}
We conclude the paper by considering closed surfaces of positive Gaussian curvature (ovaloids) in $\mathbb{R}^{3}$.
For $n = 2$ relation  \eqref{16} becomes
            \begin{equation*}
            \frac{H}{q}-\frac{H_\textsc{aff}}{q_\textsc{aff}}=\frac{1}{2}\triangle^{II}\left(\ln \varphi\right)+\frac{1}{2}\,\nabla^{II}\left(\ln  \varphi\right),
            \end{equation*}
from which we have
            \begin{proposition}
            Let $(\Phi,\bar{y})$ be a relatively normalized ovaloid in $\mathbb{R}^{3}$. Then
            \begin{equation*}
            \iint _{M} \left(\frac{H}{q} - \frac{H_\textsc{aff}}{q_\textsc{aff}}\right) \, \ud O_{II} \geq 0,
            \end{equation*}
            where $ \ud O_{II}$ is the element of area of $\Phi$ with respect to the second fundamental form $II$ of $\Phi$. The equality is valid \Iff{} the relative normalization $\bar{y}$ is constantly proportional to the equiaffine normalization $\bar{y}_\textsc{aff}$.
            \end{proposition}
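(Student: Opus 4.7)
The plan is to integrate the identity displayed just above the proposition over the closed surface $M$ and analyze each term separately. Since $\Phi$ is an ovaloid, $K_{I}>0$ on $M$, so the second fundamental form $II$ is positive definite and $(M,II)$ is a closed, compact Riemannian $2$-manifold without boundary. Also the support function $q>0$ and $q_\textsc{aff}>0$, so $\ln \varphi$ is a well-defined smooth function on $M$.

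Starting from the formula (the $n=2$ specialization of \eqref{16})
\begin{equation*}
\frac{H}{q}-\frac{H_\textsc{aff}}{q_\textsc{aff}}=\frac{1}{2}\triangle^{II}(\ln \varphi)+\frac{1}{2}\,\nabla^{II}(\ln \varphi),
\end{equation*}
I would integrate both sides against $\ud O_{II}$ over $M$. The first term on the right is the second Beltrami operator applied to a smooth function, so the divergence theorem on the closed manifold $(M,II)$ gives
\begin{equation*}
\iint_{M} \triangle^{II}(\ln \varphi)\, \ud O_{II}=0.
\end{equation*}

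For the second term, I would use that $\nabla^{II}(\ln \varphi)$ is the first Beltrami differential parameter of $\ln \varphi$ with respect to $II$, which is pointwise the squared norm of the $II$-gradient of $\ln \varphi$ and hence non-negative. Therefore
\begin{equation*}
\iint_{M} \nabla^{II}(\ln \varphi)\, \ud O_{II}\geq 0,
\end{equation*}
which yields the desired inequality after combining the two integrals.

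For the equality case, if the integral vanishes then the non-negative integrand $\nabla^{II}(\ln \varphi)$ must vanish identically on $M$, hence $\ln \varphi$ and therefore $\varphi$ is constant. By the remark following \eqref{9}, this is equivalent to $q=c\,q_\textsc{aff}$ for some $c\in\mathbb{R}^{\ast}$, i.e.\ to $\bar{y}$ being constantly proportional to $\bar{y}_\textsc{aff}$. The only subtlety is the clean vanishing of the Laplacian integral, but this is immediate on a closed Riemannian manifold and requires no boundary assumptions; the rest is a direct sign analysis, so I expect no genuine obstacle.
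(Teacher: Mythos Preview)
Your proposal is correct and follows exactly the argument the paper intends: integrate the $n=2$ specialization of \eqref{16} over the closed manifold $(M,II)$, use that $\iint_M \triangle^{II}(\ln\varphi)\,\ud O_{II}=0$ and that $\nabla^{II}(\ln\varphi)\geq 0$ since $II$ is positive definite, and characterize equality via $\varphi=\text{const.}$, i.e.\ $q=c\,q_\textsc{aff}$. There is nothing to add.
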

Furthermore, for $n=2$, relation \eqref{18} becomes
            \begin{equation}\label{24}
            \frac{S}{q}-\frac{S_\textsc{aff}}{q_\textsc{aff}}=\frac{1}{2} \triangle^{II}\left( \ln \varphi\right).
            \end{equation}
From this equation we easily deduce:
            \begin{proposition}
            Let $(\Phi,\bar{y})$ be a relatively normalized ovaloid of $\mathbb{R}^{3}$. If the function
            \begin{equation*}
            \frac{S}{q}-\frac{S_\textsc{aff}}{q_\textsc{aff}}
            \end{equation*}
            does not change its sign on M, then the relative normalization $\bar{y}$ and the equiaffine normalization $\bar{y}_\textsc{aff}$ are constantly proportional.
            \end{proposition}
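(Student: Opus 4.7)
The plan is to read the hypothesis through equation \eqref{24}, which rewrites $\frac{S}{q} - \frac{S_\textsc{aff}}{q_\textsc{aff}}$ as $\tfrac{1}{2}\triangle^{II}(\ln\varphi)$. So the assumption that this function does not change sign on $M$ is exactly the assumption that $\triangle^{II}(\ln\varphi)$ has a constant sign on $M$. The conclusion to be reached, in view of the remark following \eqref{9} and the definition \eqref{8}, is that $\varphi$ is constant on $M$.

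The key observation is that an ovaloid is a closed surface with $K_{I} > 0$, hence $II$ is positive definite and $(M,II)$ is a compact Riemannian $2$-manifold without boundary. On such a manifold, the divergence theorem gives
\begin{equation*}
\iint_{M} \triangle^{II}(\ln\varphi) \, \ud O_{II} = 0
\end{equation*}
for the smooth function $\ln\varphi$. Combined with the sign hypothesis, this forces $\triangle^{II}(\ln\varphi) \equiv 0$, i.e.\ $\ln\varphi$ is harmonic on the closed Riemannian surface $(M,II)$.

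From there I would finish by the standard harmonic-function-on-closed-manifold argument: multiplying $\triangle^{II}(\ln\varphi) = 0$ by $\ln\varphi$ and integrating by parts on $M$ yields
\begin{equation*}
\iint_{M} \nabla^{II}(\ln\varphi) \, \ud O_{II} = 0,
\end{equation*}
and since the integrand is nonnegative it must vanish identically; hence $\ln\varphi$, and therefore $\varphi$, is constant on $M$. By \eqref{8} this means $q = c\, q_\textsc{aff}$ for some $c \in \mathbb{R}^{*}$, which is precisely the statement that $\bar{y}$ and $\bar{y}_\textsc{aff}$ are constantly proportional.

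I do not anticipate any serious obstacle: once \eqref{24} is in hand, the argument is a textbook application of the divergence theorem on a closed Riemannian surface. The only point worth checking carefully is that the ovaloid hypothesis genuinely supplies both ingredients needed for the integral identity, namely compactness of $M$ without boundary and positive-definiteness of $II$ (the latter coming from $K_{I} > 0$); both are immediate from the definition of an ovaloid.
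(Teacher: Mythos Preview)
Your proposal is correct and is essentially the same approach the paper has in mind: the paper derives \eqref{24} and then says ``From this equation we easily deduce'' the proposition, and your argument via the divergence theorem on the closed Riemannian surface $(M,II)$ is precisely the standard filling-in of that step. A minor remark: once you know $\triangle^{II}(\ln\varphi)$ has one sign on the compact $(M,II)$, the strong maximum principle (or Hopf's theorem) already forces $\ln\varphi$ to be constant, so the intermediate harmonicity step and integration by parts can be shortened; but your version is perfectly valid.
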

Finally, from the relations \eqref{10}, \eqref{12}, \eqref{15} and \eqref{17} for $n=2$ we obtain
            \begin{equation}\label{25}
            \frac{S_\textsc{aff}}{q_\textsc{aff}}-S_{II}=\frac{1}{2} \triangle^{II}\left( \ln q_\textsc{aff} \right).
            \end{equation}
If we now integrate \eqref{24} and \eqref{25} over $M$ we get
            \begin{equation*}
            \iint _{M} \frac{S}{q} \, \,  \ud O_{II} = \iint _{M} \frac{S_\textsc{aff}}{q_\textsc{aff}} \, \, \ud O_{II} = \iint _{M} S_{II} \, \, \ud O_{II} = 2\pi\chi,
            \end{equation*}
where $\chi$ is the Euler characteristic of $\Phi$.
Hence we arrive at
            \begin{proposition}
            Let $\Phi$ be a relatively normalized ovaloid of $\mathbb{R}^{3}$. Then the following integral formula is valid
            \begin{equation*}
            \iint _{M} \frac{S}{q} \, \,  \ud O_{II} = 2\pi\chi,
            \end{equation*}
            where $\chi$ is the Euler characteristic of $\Phi$.
            \end{proposition}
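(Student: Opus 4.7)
The plan is to read off the identity directly from equations \eqref{24} and \eqref{25}, which have already been established, by integrating both over the closed surface $M$ and exploiting two standard facts: the divergence theorem on a compact manifold without boundary kills integrals of Laplace--Beltrami terms, and the Gauss--Bonnet theorem evaluates the remaining curvature integral.

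Concretely, I would first integrate \eqref{24} against the second-fundamental-form area element $\ud O_{II}$. Because $M$ is an ovaloid, it is compact and boundaryless, and the second fundamental form $II$ is a genuine Riemannian metric on $M$ (positive definite, since $K_I > 0$). The function $\ln \varphi$ is smooth on $M$ because both $q$ and $q_\textsc{aff}$ are positive $C^s$ functions, so by the divergence theorem applied to the $II$-gradient of $\ln \varphi$, one gets
\begin{equation*}
\iint_M \triangle^{II}(\ln \varphi) \, \ud O_{II} = 0,
\end{equation*}
and therefore
\begin{equation*}
\iint_M \frac{S}{q} \, \ud O_{II} = \iint_M \frac{S_\textsc{aff}}{q_\textsc{aff}} \, \ud O_{II}.
\end{equation*}

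The second step is to integrate \eqref{25} in the same way. Since $K_I > 0$ everywhere on the ovaloid, $\ln q_\textsc{aff} = \tfrac{1}{n+2}\ln K_I$ is smooth, so again the Laplace--Beltrami term integrates to zero and we obtain
\begin{equation*}
\iint_M \frac{S_\textsc{aff}}{q_\textsc{aff}} \, \ud O_{II} = \iint_M S_{II} \, \ud O_{II}.
\end{equation*}

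Finally, I would invoke the Gauss--Bonnet theorem for the Riemannian surface $(M, II)$. Under the convention used in the excerpt (where $S_{II}$ is the inner curvature of the two-dimensional Riemannian manifold $II$, i.e.\ its Gaussian curvature), this gives $\iint_M S_{II}\, \ud O_{II} = 2\pi\chi$. Chaining the three equalities yields the claim. There is essentially no obstacle: the only point that needs a sentence of justification is that the pairing ``Laplace--Beltrami of a smooth function integrated against the intrinsic area element'' vanishes on a closed manifold, together with the observation that the ovaloid hypothesis is exactly what supplies compactness without boundary and positivity of $II$.
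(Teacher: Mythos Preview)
Your proposal is correct and follows exactly the route the paper takes: integrate \eqref{24} and \eqref{25} over the closed surface, use that the $II$-Laplacian of a smooth function integrates to zero, and finish with Gauss--Bonnet for $(M,II)$. If anything, you supply more justification for the vanishing of the Laplacian integrals and for the applicability of Gauss--Bonnet than the paper does.
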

            \begin{corollary}
            For an ovaloid $\Phi \subset\mathbb{R}^{3}$ the following integral formula is valid
            \begin{equation*}
            \iint _{M} \frac{S_\textsc{aff}}{q_\textsc{aff}} \, \, \ud O_{II} = 2\pi\chi.
            \end{equation*}
            \end{corollary}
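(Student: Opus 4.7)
The plan is to obtain the corollary as an immediate specialization of the preceding proposition rather than re-deriving it from scratch. The proposition states that for every ovaloid $\Phi \subset \mathbb{R}^{3}$ equipped with a relative normalization $\bar{y}$, one has
\[
\iint _{M} \frac{S}{q}\,\ud O_{II} = 2\pi\chi.
\]
Since the equiaffine normalization $\bar{y}_\textsc{aff}$ is itself a (distinguished) relative normalization — this is implicit in the fact that $K_{I}>0$ everywhere on the ovaloid, so $q_\textsc{aff}=K_I^{1/4}$ is smooth and positive, and the conditions \eqref{1} are satisfied — the proposition applies with $\bar{y}=\bar{y}_\textsc{aff}$. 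Under that choice one has $q=q_\textsc{aff}$ and $S=S_\textsc{aff}$, so the displayed integral formula specializes directly to the claim of the corollary.

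If, instead, one prefers an independent derivation, I would integrate relation \eqref{25} over $M$. Since the ovaloid is closed (compact, without boundary) and $II$ is a Riemannian metric, the divergence theorem gives
\[
\iint_{M}\triangle^{II}\!\bigl(\ln q_\textsc{aff}\bigr)\,\ud O_{II}=0,
\]
so integration of \eqref{25} yields $\iint_M (S_\textsc{aff}/q_\textsc{aff})\,\ud O_{II}=\iint_M S_{II}\,\ud O_{II}$. Then one invokes the Gauss--Bonnet theorem in the form $\iint_M S_{II}\,\ud O_{II}=2\pi\chi$, which is exactly the equality already used at the end of the proof of the previous proposition to identify the Euler characteristic.

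There is essentially no obstacle here; the only checks are the admissibility of $\bar{y}_\textsc{aff}$ as a relative normalization (automatic, since $K_I$ does not vanish on an ovaloid) and the fact that the specialization $\bar{y}=\bar{y}_\textsc{aff}$ corresponds to $\varphi \equiv 1$, which makes every previously derived identity still available. I would favour presenting the corollary via the direct specialization argument, since it is a one-line deduction from the preceding proposition and emphasises that the equiaffine integral formula is simply the particular case of a more general invariant statement.
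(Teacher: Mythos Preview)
Your proposal is correct and matches the paper's own argument: the paper integrates \eqref{25} over the closed ovaloid, uses that the integral of $\triangle^{II}(\ln q_\textsc{aff})$ vanishes, and applies Gauss--Bonnet to obtain the chain $\iint_M S/q\,\ud O_{II}=\iint_M S_\textsc{aff}/q_\textsc{aff}\,\ud O_{II}=\iint_M S_{II}\,\ud O_{II}=2\pi\chi$, from which both the proposition and the corollary are read off. Your alternative via direct specialization $\bar{y}=\bar{y}_\textsc{aff}$ is an equally valid one-line deduction once the proposition is in hand.
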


\end{document}